\newtheorem{theorem}{Theorem}
\newtheorem{corollary}[theorem]{Corollary}
\newtheorem{lemma}[theorem]{Lemma}
\newtheorem{question}[theorem]{Question}
\begin{document}

\title{A non-LEA Sofic Group} 

\author{Aditi Kar \and Nikolay Nikolov}
\noindent \address{ A. Kar, University of Southampton, SO17 1BJ, UK, a.kar@soton.ac.uk \newline N. Nikolov, University of Oxford, OX2 6GG, UK. nikolov@maths.ox.ac.uk.} 
\begin{abstract}
We describe elementary examples of finitely presented sofic groups which are not residually amenable (and thus not initially subamenable or LEA, for short). We ask if an amalgam of two amenable groups over a finite subgroup is residually amenable and answer this positively for some special cases, including countable locally finite groups, residually nilpotent groups and others.

 \textbf{MSC}: 20E08, 20E26
\end{abstract}
\maketitle

Gromov \cite{gromov} defined the class of sofic groups as a generalization of residually finite and of amenable groups, see \cite{cap} for an introduction. The question of the existence of a non-sofic group remains open.




Two related classes of groups defined in \cite{gromov} are the LEF groups and LEA groups. A group $G$ is LEF (locally embeddable in finite) if for every finite subset $F \subset G$, there exists a partial monomorphism of $F$ into a finite group $H$, meaning there exists an injection $f: F \rightarrow H$ such that if $x,y,xy \in F$ then $f(xy)=f(x)f(y)$. Similarly, a group is LEA (locally embeddable in amenable, also called \emph{initially subamenable}) if every finite subset of $G$ supports a partial monomorphism to an amenable group. It is not hard to show that amongst finitely presented groups, the class of LEF groups coincides with the residually finite groups and the class of LEA groups coincides with the residually amenable groups. 

Gromov's question whether every sofic group is LEA was answered negatively by Cornulier \cite{Yves}. Cornulier's example is an extension of an LEF group by an amenable quotient and hence, the LEF normal subgroup is co-amenable. For an introduction to co-amenability, we refer the reader to \cite{MPopa}.  

It was recently proved that if $A$ and $B$ are sofic groups and $C$ is an amenable subgroup of both $A$ and $B$, then the free product with amalgamation $G=A*_CB$ is also sofic, see \cite{CollinsDykema}, \cite{ElekSzabo} and \cite{Paunescu}. This theorem allows us to produce further examples of finitely presented sofic groups which are not residually amenable or equivalently LEA. The examples actually show that the class of LEA groups is not closed under taking free products with infinite cyclic amalgamations. 

Consider $\mathrm{SL}_n(\mathbb{Z}[\frac{1}{p}])$ for $n \geq 3$ and $p$, a prime. For $x \in \mathbb Z [\frac{1}{p}]$ denote $z(x) = \left(
\begin{array}{ccc} 
1 & x & 0 \\ 
0 & 1 &  0 \\
0 & 0 & \mathrm{I}_{n-2} 
\end{array}
\right)$ and  $Z$ be the infinite cyclic subgroup generated by the element 
$z=z(1)$.

\begin{theorem} \label{main}
The amalgam $G= \mathrm{SL}_n(\mathbb{Z}[\frac{1}{p}]) *_Z \mathrm{SL}_n(\mathbb{Z}[\frac{1}{p}])$ is sofic but not LEA. In fact $G$ does not have a co-amenable LEA subgroup.
\end{theorem}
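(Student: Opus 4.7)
The plan is to dispatch soficity quickly and then obstruct LEA via the interaction of property (T) and the congruence subgroup property with the amalgamated product structure.

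Soficity is immediate: $A := \mathrm{SL}_n(\mathbb{Z}[\tfrac{1}{p}])$ is a finitely generated linear group, hence residually finite by Malcev and so sofic; since $Z \cong \mathbb{Z}$ is amenable, the Elek--Szab\'o/Collins--Dykema/P\u{a}unescu theorem cited in the introduction gives soficity of $G$.

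For the failure of LEA (first treating $H = G$), the key elements are, for $k \geq 1$, $a_k := z(p^{-k})$ viewed in the first factor $A_1$ and $b_k := z(p^{-k})$ viewed in the second factor $A_2$. Since $z(x)^m = z(mx)$, both are $p^k$-th roots of $z$; neither lies in $Z$ (as $p^{-k} \notin \mathbb{Z}$), so the amalgamated-product normal form gives $a_k b_k^{-1} \neq 1$ in $G$. I claim that $\phi(a_k) = \phi(b_k)$ for every amenable quotient $\phi\colon G \to Q$. Two structural facts about $A$ (for $n \geq 3$) do the work: (i) Kazhdan's property (T), so $\phi(A_i)$ is an amenable property-(T) group, hence finite; and (ii) the congruence subgroup property (Bass--Lazard--Serre, Matsumoto), by which the finite-index kernel of $\phi|_{A_i}$ contains $\Gamma(M_i)$ for some $M_i$ with $\gcd(M_i, p) = 1$, giving a factorisation through $\mathrm{SL}_n(\mathbb{Z}/M_i)$. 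In that ring $1/p$ is a unit, so $z(p^{-k}) = z(1)^{p^{-k} \bmod M_i}$ lies in $\langle z\rangle$, yielding $\phi(a_k) = \phi(z)^{p^{-k} \bmod M_1}$ and $\phi(b_k) = \phi(z)^{p^{-k} \bmod M_2}$. Since the order of $\phi(z)$ divides both $M_1, M_2$, the two exponents reduce to the same integer modulo that order, giving $\phi(a_k) = \phi(b_k)$. Thus $a_k b_k^{-1}$ dies in every amenable quotient, so the finitely presented group $G$ is not residually amenable, hence not LEA.

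To rule out a co-amenable LEA subgroup $H \leq G$, I would run the above in a suitable subgroup of $H$. Decomposing $\ell^2(G/H)$ into $A_i$-orbits, co-amenability yields $g_i \in G$ for which $H \cap A_i^{g_i}$ is co-amenable in $A_i^{g_i}$ -- hence, by property (T), of finite index. After conjugating $H$ once and then adjusting along a geodesic path in the Bass--Serre tree of $G$ connecting $g_1 A_1$ to $g_2 A_2$, I would arrange $H \cap A_i$ to be of finite index in $A_i$ for both $i = 1, 2$. By CSP, $H \supseteq \Gamma^{(i)}(N_i)$ for some $N_i$ coprime to $p$, so with $N := \mathrm{lcm}(N_1, N_2)$ the elements $a_k := z(Np^{-k}) \in A_1$ and $b_k := z(Np^{-k}) \in A_2$ both lie in $H$, are $p^k$-th roots of $z^N$, and remain distinct in $G$ by normal form.

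Finally, pass to $H_0 := \langle H \cap A_1, H \cap A_2 \rangle \leq H$. The normal form in $G$ identifies $H_0$ with the amalgam $(H \cap A_1) *_{H \cap Z} (H \cap A_2)$, and since each $H \cap A_i$ is a finite-index subgroup of the finitely presented $S$-arithmetic group $A_i$, $H_0$ is finitely presented. As a subgroup of the LEA group $H$ it is LEA, and finite presentation then upgrades this to residual amenability. Running the previous argument inside $H_0$ -- noting that $H \cap A_i$ inherits property (T) from its ambient $A_i$ and that CSP applies to finite-index subgroups of $A_i$ -- forces $\psi(a_k) = \psi(b_k)$ in every amenable quotient $\psi\colon H_0 \to Q$, contradicting residual amenability. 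The main anticipated obstacle is the simultaneous alignment of the two conjugates $A_1^{g_1}, A_2^{g_2}$ into a single frame so that $H$ has finite-index intersection with $A_1$ and $A_2$ at the same time; this requires exploiting the $H$-action on the Bass--Serre tree rather than a single global conjugation.
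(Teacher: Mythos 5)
Your core mechanism is sound and close to the paper's: property (T) forces the image of each $\mathrm{SL}_n(\mathbb{Z}[\frac{1}{p}])$ factor in any amenable group to be finite, and then two distinct $p^k$-th roots of a common element of $Z$, one in each factor, are forced to have equal images, contradicting the normal form. Where the paper derives the collision from an elementary observation (a finite $p$-divisible group has order coprime to $p$, so $p$-th roots are unique and the image of $Y_i$ equals that of $Z$), you invoke the congruence subgroup property to factor through $\mathrm{SL}_n(\mathbb{Z}/M_i)$ with $\gcd(M_i,p)=1$; this is a heavier tool but your computation with exponents $e_i$ satisfying $e_i p^k \equiv 1 \pmod{M_i}$ is correct. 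Likewise, reducing the subgroup case to the finitely presented amalgam $H_0=(H\cap A_1)*_{H\cap Z}(H\cap A_2)$ and using that a finitely presented LEA group is residually amenable is a legitimate variant of the paper's step of extending a partial monomorphism on a suitable finite subset of $M_1\cup M_2$ to genuine homomorphisms on the finitely presented groups $M_i$.

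The genuine gap is exactly the one you flag: producing a \emph{single} conjugate of $H$ meeting both factors in finite index. Extracting $g_1$ and $g_2$ separately and then "adjusting along a geodesic in the Bass--Serre tree" does not close it: a global conjugation moves both fixed vertices simultaneously, so after normalizing $g_1=1$ you only know that $H$ contains a finite-index subgroup of $A_1$ (fixing $v_1$) and a finite-index subgroup of some $A_2^{g}$ (fixing $gv_2$), and nothing in that data forces $H\cap A_2$ to have finite index in $A_2$ --- a priori $H\cap A_2$ could be as small as $H\cap Z$. The missing idea is to use the $G$-invariant mean $\mu$ on $X=G/H$ once, for both factors simultaneously: if the union $X_{i,\infty}$ of the infinite $\Sigma_i$-orbits had positive measure, normalizing $\mu$ there would give an amenable $\Sigma_i$-action without finite orbits, contradicting Glasner--Monod; hence $\mu(X_{1,\infty}\cup X_{2,\infty})=0$, and any point $t$ in the nonempty complement has stabilizer a conjugate of $H$ whose intersection with each $\Sigma_i$ has finite index at the same time. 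With that replacement for your alignment step, the rest of your argument goes through.
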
 

\begin{proof} As we noted earlier, soficity of $G$ follows from \cite{CollinsDykema}. Now, suppose for the sake of contradiction that $H$ is a co-amenable LEA subgroup of $G$. We denote the two copies of $\mathrm{SL}_n(\mathbb{Z}[\frac{1}{p}])$ as $\Sigma_1$ and $\Sigma_2$ with two isomorphisms $f_i: \mathrm{SL}_n(\mathbb{Z}[\frac{1}{p}]) \rightarrow \Sigma_i$ ($i=1,2$). We have $f_1(z)=f_2(z)$ and will identify the cyclic group $Z$ with its image $f_1(Z)=f_2(Z)$ in $G$.
Define $Y= \{ y(x) \ | \ x  \in \mathbb Z [\frac{1}{p}] \}$ and let $Y_i= f_i(Y)$, a subgroup of $\Sigma_i$ isomorphic to $(\mathbb Z[\frac{1}{p}], +)$.  Thus $Y_1 \cap Y_2=Z$.

We claim that some $G$-conjugate of $H$ intersects both $\Sigma_1$ and $\Sigma_2$ in subgroups of finite index. Let $X=G/H$ be the set of left cosets of $H$ in $G$. The co-amenability of $H$ in $G$ is equivalent to the amenability of the action of $G$ on $X$, that is, the existence of a finitely additive $G$-invariant mean $\mu$ on all subsets of $X$ \cite{MPopa}. Since $\Sigma_i$ is a lattice in $\mathrm{SL}_n(\mathbb R) \times \mathrm{SL}_n(\mathbb Q_p)$  it has property $(T)$ (see for example \cite[Theorem 1.4.15]{BHV}). Therefore every amenable action of $\Sigma_i$ must have a finite orbit \cite[Lemma 4.2]{GM}. Let $X_{i, \infty} \subset X$ be the union of all the infinite orbits of $\Sigma_i$ on $X$. We must have that $\mu(X_{i, \infty})=0$, otherwise we can normalize $\mu$ on $X_{i, \infty}$ to a $\Sigma_i$-invariant mean where $\Sigma_i$  acts on $X_{i,\infty}$ without finite orbits. Therefore $ \mu(X_{1, \infty} \cup X_{2,\infty})=0$ and we can take $t \in X \backslash (X_{1, \infty} \cup X_{2,\infty})$. The stabilizer of $t$ in $G$ is the required conjugate of $H$.

By replacing $H$ with $\mathrm{Stab}_G(t)$ we may assume from now on that $M_i= \Sigma_i \cap H$ has finite index in $\Sigma_i$ for $i=1,2$. In particular $H \cap Y_i$ is a finite index subgroup of $Y_i$. 
Note that $Y_i/Z$ is a divisible group and so $Y_i= (Y_i \cap H)Z$ giving that $[Y_1: (Y_1 \cap H)]= [Z: (Z \cap H)]=[Y_2: (Y_2 \cap H)]=a$, say. The integer $a$ is coprime to $p$ and $H \cap Y_i= f_i \circ y (a \mathbb Z [\frac{1}{p}])$. Define $\beta_i=f_i(y(\frac{a}{p}))$, thus $\beta_i \in (H \cap Y_i) \backslash Z$ while $\beta_1^p=\beta_2^p \in Z \cap H$.

Note that each $M_i=\Sigma_i \cap H$ is a finitely presented group. Therefore there exists a finite set $F \subset M_1 \cup M_2$ such that every partial monomorphism $h$ from $F \subset H$ into any group $A$ extends to a map $h: M_1 \cup M_2 \rightarrow A$,  such that $h|_{M_i}$ is a group homomorphism. Now assume that $A$ is amenable. Since $M_1, M_2$ have property $(T)$ their images $h(M_i)$ in $A$ are finite.
In particular the groups $h(H \cap Y_1)$ and $h( H \cap Y_2)$ are finite and $p$-divisible and so must have size coprime to $p$. Every finite image of $H \cap Y_i$ coincides with the image of its subgroup $Z \cap Y_i$ and so $h(H \cap Y_1)= h(H \cap Z)= h(H \cap Y_2) $ is a finite group $L$ with $gcd(p, |L|)=1$. Note that $h(\beta_1)^p=h(\beta_2)^p$. Since every element of $L$ has a unique $p$-th root it follows that $h(\beta_1)=h(\beta_2)$. However $\beta_1 \not = \beta_2$ in $H$. We deduce that there cannot be a partial monomorphism from $F \cup \{\beta_1, \beta_2\}$ into any amenable group, and so $H$ is not LEA.
 \end{proof}
\subsection*{Residually amenable groups}
Theorem \ref{main} shows that a free product with amalgamation over $\mathbb Z$ of two residually amenable groups need not be residually amenable. On the other hand it is not difficult to show that a free product of residually amenable groups is residually amenable, for a proof see  \cite{berlai}. A natural question is whether this result extends to amalgamations over finite subgroups:

\begin{question}
\label{q1} 
Is the class of residually amenable groups closed under free products with amalgamation over finite subgroups?
\end{question}

To show that a free product $G$ of residually amenable groups $A,B$ amalgamating a finite subgroup is residually amenable, it is sufficient to consider the case when $A$ and $B$ are both amenable. Let $g$ be a non-trivial element of $G$ and let $S=\{a_1,\ldots,a_k\}$ and $T:=\{b_1,\ldots, b_n\}$ be the non-trivial syllables of $g$. Then, there exist homomorphisms $\alpha, \beta$ from $A, B$ respectively to amenable groups $A',B'$, which are injective on $C \cup S$  and $C\cup T$. This clearly allows one to define a homomorphism from $G$ to $A'*_C B'$ such that the image of $g$ is non-trivial. 

Another question related to Question \ref{q1} is the following. 

For a class of groups $\mathcal L$ we say that $\mathcal L$ \emph{admits amalgamations over finite subgroups} when the following condition holds: For all $A, B \in \mathcal L$ and a finite group $C$ having two injective homomorphisms $\phi_A, \phi_B$ to $A$ and $B$ respectively, there exists a group $H \in \mathcal L$ and two injective homomorphisms $\psi_A, \psi_B$ from $A, B$ respectively to $H$ such that $\psi_A \circ \phi_A=\psi_B \circ \phi_B$.

\begin{question} \label{q2} Does the class of amenable groups admit amalgamations over finite subgroups?
\end{question}

The authors don't know the answer to Question \ref{q2} even in the case when $A$ and $B$ are solvable groups. Note that if Question \ref{q2} has positive answer then there is a homomorphism $f:A*_CB \rightarrow H$ such that the the kernel of $f$ is free. Hence $A *_C B$ is residually amenable, which easily implies that Question \ref{q1} has a positive answer.

We can answer questions \ref{q1} and \ref{q2} affirmatively in the special case of locally finite groups.

\begin{theorem}The class of countable locally finite groups admits amalgamations over finite subgroups. As a consequence, the amalgam of two countable locally finite groups over a finite subgroup is residually amenable.
\end{theorem}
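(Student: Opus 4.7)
The plan is in two parts. First, I aim to prove the amalgamation property for countable locally finite groups. The residual amenability of $A *_C B$ will then follow from the discussion already in the paper: a countable locally finite $H$ containing $A, B$ amalgamated over $C$ is amenable, and the resulting homomorphism $f: A*_C B \to H$ has free kernel by Bass--Serre theory (vertex stabilisers in $A *_C B$ are conjugates of $A$ or $B$, which inject into $H$, so the kernel acts freely on the Bass--Serre tree), whence $A *_C B$ is residually amenable by the implication stated earlier in the paper.

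For the amalgamation property, I will write $A = \bigcup_{n \geq 0} A_n$ and $B = \bigcup_{n \geq 0} B_n$ as increasing unions of finite subgroups, with $C$ identified with $\phi_A(C) \subseteq A_0$ and $\phi_B(C) \subseteq B_0$. I then construct inductively a chain of finite groups $H_0 \leq H_1 \leq H_2 \leq \cdots$ together with injective homomorphisms $\alpha_n : A_n \to H_n$ and $\beta_n : B_n \to H_n$ that agree on $C$ and satisfy $\alpha_{n+1}|_{A_n} = \alpha_n$, $\beta_{n+1}|_{B_n} = \beta_n$ (after identifying $H_n$ with its image in $H_{n+1}$). Setting $H = \bigcup_n H_n$ then yields the required countable locally finite group containing both $A$ and $B$ amalgamated over $C$.

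The engine of the induction is the following lemma: for any finite groups $R \leq P$ and $R \leq Q$, there exists a finite group $K$ with injective homomorphisms $P \to K$ and $Q \to K$ agreeing on $R$. This follows from the classical fact that the amalgamated product $P *_R Q$ of finite groups over a common subgroup is residually finite; one takes a finite quotient injective on the finite subset $P \cup Q$. Given the lemma, the inductive step is a double application: starting from $H_n$ with its embedded copies $\alpha_n(A_n), \beta_n(B_n)$, first amalgamate $H_n$ with $A_{n+1}$ over $\alpha_n(A_n)$ to obtain an intermediate finite group $H_n'$, then amalgamate $H_n'$ with $B_{n+1}$ over $\beta_n(B_n) \subseteq H_n \subseteq H_n'$ to obtain $H_{n+1}$, with the two embeddings automatically extending $\alpha_n$ and $\beta_n$ respectively. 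The main obstacle is the key lemma; I would either cite it from the literature on residual finiteness of amalgams or prove it directly by producing a faithful finite permutation representation of $P *_R Q$ on a suitable coset space and passing to the image.
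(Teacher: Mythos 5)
Your proposal is correct and follows essentially the same route as the paper: write $A$ and $B$ as unions of finite subgroups, alternately apply the fact that finite groups admit amalgamations over finite subgroups (the paper cites this as Lemma II.2.6.10 of Serre's \emph{Trees}, which is equivalent to your residual-finiteness argument) to build an increasing chain of finite groups whose union is the desired locally finite $H$, and then conclude residual amenability of $A*_CB$ from the freeness of the kernel of $A*_CB\rightarrow H$. The only cosmetic difference is that the paper spells out the last step via the derived series of the free kernel, whereas you defer to the implication already stated in the text.
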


\begin{proof}  Let $A$ and $B$ be two countable locally finite groups and let $C$ be a finite group with two injective homomorphisms $\phi_A, \phi_B$ to $A$ and $B$ respectively. Let $A$ be the direct limit of the finite groups $\{A_{j}\}_{j=1}^\infty$ with injective homomorphisms $f_{A,i}: A_{i} \rightarrow A_{i+1}$ and denote by $\{B_j\}_{j=1}^\infty$ and $f_{B,i}: B_{i} \rightarrow B_{i+1}$ the corresponding direct limit for $B$. Since $C$ is finite we may assume that $\phi_A (C) \subset A_{1}$ and $\phi_B (C) \subset B_{1}$. Now recall the following lemma

\begin{lemma}[Lemma II.2.6.10 \cite{se}] \label{serre} The class of finite groups admits amalgamations over finite subgroups.
\end{lemma} 
We apply Lemma \ref{serre}  to $A_{1},B_{1},C,\phi_A,\phi_B$ and obtain a finite group $H_1$ with injections $t_{A,1},t_{B,1}$ from $A_1$, respectively $B_1$ to $H_1$ such that $t_{A,1} \phi_A= t_{B,1} \phi_B$. 

Next we apply Lemma \ref{serre} to $H_1,A_2,A_1, t_{A,1},f_{A,1}$ and obtain a finite group $H_2$ with injections $d_1: H_1 \rightarrow H_2$ and $t_{A,2}: A_2 \rightarrow H_2$ which agree on $A_1$.

The next application of Lemma \ref{serre} to $H_2,B_2,B_1, d_1 t_{B,1}, f_{B,1}$ gives a finite group $H_3$ and injections $d_2: H_2 \rightarrow H_3$ and $t_{B,2}: B_2 \rightarrow H_3$ which agree on $B_1$. 
Continuing in the same way by induction we get a directed system $(H_j)$ of finite groups with injections $d_i :H_i\rightarrow H_{i+1}$ such that $H_{2i+1}$ contains an embedded copy of $A_i$ and $B_i$ with intersection containing an isomorphic copy of $C$. Let $H$ be the direct limit of $(H_j)$. By construction there are induced  monomorphisms $\psi_A$ and $\psi_B$ from $A$ and $B$ to $H$  as required.

The final part follows by observing that the homomorphism $\psi_A * \psi_B : A*_CB \rightarrow H$ has kernel which is a free group $K$. Now if $K^{(n)}$ is the $n$-th term of the derived series of $K$ we have that $\frac{A*_C B} {K^{(n)}}$ is an amenable group for each $n \in \mathbb N$ and therefore $A*_CB$ is residually amenable.
\end{proof}

Recall that the $FC$-centre of $G$ is defined to be the union of the finite conjugacy classes in $G$. This is a characteristic subgroup of $G$.
\begin{theorem}Let $A, B$ be residually amenable groups and let $C$ be a finite group such that $C$ is contained in the $FC$-centre of both $A$ and $B$. Then, $A*_C B$ is residually amenable.
\end{theorem}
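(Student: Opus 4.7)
The plan is first to reduce, via the residual amenability of $A$ and $B$, to the case where both are amenable, and then to identify a finite-index subgroup of $A*_CB$ as a central product whose residual amenability follows from the paper's free-product result. For the reduction, pick a nontrivial $g \in G = A*_CB$ in normal form and use residual amenability to find amenable quotients $\alpha: A \to A'$ and $\beta: B \to B'$ that are injective on $C$ and on the syllables of $g$; the induced homomorphism $G \to A' *_C B'$ sends $g$ to a nontrivial element, and the FC-hypothesis transfers to $A' = \alpha(A)$ and $B' = \beta(B)$. Henceforth we assume $A, B$ are amenable.

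The finiteness of $C$ and the FC-hypothesis give $[A : C_A(C)], [B : C_B(C)] < \infty$. Set $A_0 := C_A(C)$, $\tilde A := A_0 \cdot C$, and analogously $B_0, \tilde B$; each $\tilde A, \tilde B$ is a finite-index subgroup of $A$ (resp.\ $B$) in which $C$ is normal and centralised by $A_0$ (resp.\ $B_0$). A standard Bass--Serre argument shows that $\tilde G := \tilde A *_C \tilde B$ has finite index in $G$, so it suffices to show $\tilde G$ is residually amenable. Setting $Z := Z(C) = A_0 \cap C = B_0 \cap C$ and $F := A_0 *_Z B_0$, the key structural claim is a central-product decomposition
\[
\tilde G \cong (F \times C) \,/\, \{(z, z^{-1}) : z \in Z\},
\]
which rests on the identities $A_0 \cap B_0 = F \cap C = Z$ inside $\tilde G$ and on the fact that $C$ commutes with both $A_0$ and $B_0$.

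Since $Z$ is finite abelian and central in the amenable groups $A_0, B_0$, the antidiagonal $\{(z, z^{-1}) : z \in Z\}$ is a central subgroup of $A_0 \times B_0$, and the quotient $D := (A_0 \times B_0)/\{(z, z^{-1})\}$ is amenable. The natural map $F \to D$ is injective on $Z$ (if $(z,1)$ lies in the antidiagonal then $z=1$), and combined with the quotient $F \to F/Z \cong A_0/Z * B_0/Z$ (residually amenable as a free product of amenable groups), it yields an embedding $F \hookrightarrow D \times (F/Z)$ into a residually amenable group. Hence $F$ is residually amenable.

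Finally, for any nontrivial class $[(f, c)] \in \tilde G$, use residual amenability of $F$ to choose an amenable quotient $q: F \to F'$ injective on $Z \cup \{f\}$; since $q(Z)$ remains central in $F'$ and $Z \subseteq Z(C)$, the central product $(F' \times C)/\{(q(z), z^{-1}) : z \in Z\}$ is a well-defined amenable quotient of $\tilde G$ that separates $[(f, c)]$, proving $\tilde G$ (and hence $G$) residually amenable. The main technical obstacle is the structural decomposition of paragraph~2: verifying the intersection identities $A_0 \cap B_0 = F \cap C = Z$ inside $\tilde G$ requires careful work with the normal forms in $\tilde A *_C \tilde B$, as does checking that the central-product isomorphism respects the amalgamation relation.
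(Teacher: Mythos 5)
Your reduction to the case where $A$ and $B$ are amenable is fine and matches the paper, and the central-product analysis of $\tilde G=\tilde A *_C \tilde B$ is essentially sound. But the step on which everything rests --- ``a standard Bass--Serre argument shows that $\tilde G:=\tilde A *_C \tilde B$ has finite index in $G$'' --- is false. An amalgam of proper finite-index subgroups over the \emph{same} amalgamated subgroup almost never has finite index in the full amalgam: the quotient of the Bass--Serre tree of $G$ by $\langle \tilde A,\tilde B\rangle$ is in general an infinite graph. Concretely, take $A=B=S_3$ and $C=\langle(12)\rangle\cong\mathbb Z/2$ (here $C$ lies in the $FC$-centre since $A,B$ are finite). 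Then $A_0=C_A(C)=C$, so $\tilde A=\tilde B=C$ and $\tilde G=C*_CC=C\cong\mathbb Z/2$, whereas $G=S_3*_{\mathbb Z/2}S_3$ contains a nonabelian free subgroup; the index $[G:\tilde G]$ is infinite. (An Euler characteristic count, $\chi(\tilde A *_C\tilde B)=1/|\tilde A|+1/|\tilde B|-1/|C|$ versus $[G:\tilde G]\cdot\chi(G)$, shows the same failure generically.) Since residual amenability of a subgroup of infinite index says nothing about the ambient group, the argument does not go through. Note that the rest of your scheme is not the problem: residual amenability \emph{does} pass to finite-index overgroups, and your embedding of $A_0*_ZB_0$ into $D\times(A_0/Z * B_0/Z)$ is correct; it is only the finite-index claim that fails.

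The paper's proof takes a genuinely different route that avoids this trap. Because $C$ lies in the $FC$-centre, its normal closure $N$ in $A$ (resp.\ $M$ in $B$) is a \emph{finite} normal subgroup (Dietzmann-type argument). One then maps $G$ onto the amenable group $A/N\times B/M$; the kernel $K$ acts on the Bass--Serre tree with vertex stabilizers of bounded finite order, hence by Scott--Wall $K$ is virtually free, and after intersecting the conjugates of a free finite-index normal subgroup of $K$ one exhibits $G$ as free-by-amenable, hence residually amenable. If you want to salvage your approach you would have to replace the passage to $\tilde G$ by something that controls all of $G$, which in effect forces you back to quotienting by the finite normal closures rather than passing to centralizers.
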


\begin{proof} 

Note first that the normal closures $N$, (resp. $M$) of $C$ in $A$ (resp. $B$) must be finite. Indeed $N$ is generated by the finitely many conjugates of $C$ in $A$, implying that the centre of $N$ has finite index in $N$ and therefore $N$ must be finite because it is generated by finitely many elements of finite order.

Set $G=A*_C B$. Without loss of generality, we can assume that $A$ and $B$ are amenable. There is a surjective map $f$ from $G$ onto the  direct product $A/N \times B/M $. The kernel $K$ of $f$ is the Bass-Serre fundamental group of a graph of groups in which the vertex stabilizers are isomorphic to either $N$ or $M$ and in particular, have bounded size. In this situation, \cite[Theorem 7.7]{ScottWall} applies to give that $K$ embeds in the fundamental group of a finite graph of finite groups and is hence, virtually free (possibly, not finitely generated). Choose a free normal subgroup $F$ of finite index in $K$ and form $N= \cap_{g\in G} F^g$. The subgroup $N$ is normal in $G$ and $K/N$ is locally finite because it is a subdirect product of isomorphic finite groups.  This implies, $G$ is an extension of the free subgroup $N$ by the amenable group $K/N$-by-($A/N \times B/M $) and therefore, is residually amenable. 
\end{proof}
Finally we note that Question \ref{q1} has a positive answer when $C$ is Hausdorff in the profinite topologies of $A$ and $B$. 
\begin{theorem}
 Let $A, B$ be residually amenable groups and let $C$ be a finite subgroup of $A$ and  $B$. Suppose that there are finite index subgroups $A_1 \leq A$ and $B_1 \leq B$ such that $C \cap A_1= 1=C \cap B_1$. Then $A*_C B$ is residually amenable. 
\end{theorem}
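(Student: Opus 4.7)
The plan is to exploit the hypothesis to cut $G:=A*_CB$ down to a finite index normal subgroup that acts on the Bass--Serre tree of $G$ with trivial edge stabilizers, after which Bass--Serre theory together with \cite{berlai} (free products of residually amenable groups are residually amenable) will yield the result, and a routine argument will promote residual amenability from the subgroup back to $G$.

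After replacing $A_1, B_1$ by their normal cores in $A, B$, I may assume $A_1 \lhd A$ and $B_1 \lhd B$ are normal of finite index and still satisfy $C \cap A_1 = C \cap B_1 = 1$. Then $C$ embeds into each of the finite groups $A/A_1$ and $B/B_1$, so I form $Q := (A/A_1) *_C (B/B_1)$, a finitely generated amalgam of two finite groups over a finite subgroup and hence virtually free. Let $\pi : G \to Q$ be the quotient map; note that $\pi|_C$ is injective. Since $Q$ is virtually free and finitely generated, it contains a free normal subgroup $F \lhd Q$ of finite index (take the normal core of any free finite index subgroup). Set $G_1 := \pi^{-1}(F) \lhd G$, a normal subgroup of finite index. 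Since $F$ is torsion-free while $C$ is finite, $F \cap C = 1$ in $Q$, whence $G_1 \cap C = 1$; normality of $G_1$ in $G$ then gives $G_1 \cap gCg^{-1} = g(G_1 \cap C)g^{-1} = 1$ for every $g \in G$, so $G_1$ acts on the Bass--Serre tree of $G$ with trivial edge stabilizers.

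Bass--Serre theory then expresses $G_1$ as a free product of its vertex stabilizers with a free group. Each such vertex stabilizer is a subgroup of a conjugate of $A$ or of $B$, hence residually amenable (residual amenability passes to subgroups). By \cite{berlai}, a free product of residually amenable groups is residually amenable, so $G_1$ is residually amenable. To transfer residual amenability from $G_1$ to $G$: any $g \in G \setminus G_1$ is separated by the finite quotient $G/G_1$, while for $g \in G_1 \setminus \{1\}$ I choose $M \lhd G_1$ with $g \notin M$ and $G_1/M$ amenable, and let $M^* := \bigcap_{t} tMt^{-1}$ over a finite transversal of $G_1$ in $G$. Then $M^* \lhd G$, the quotient $G_1 / M^*$ embeds in a finite product of copies of $G_1/M$ and is thus amenable, so $G/M^*$ is amenable-by-finite, hence amenable, while $g \notin M^*$.

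The principal point where the hypothesis of the theorem is used is in arranging that the action of $G_1$ on the Bass--Serre tree have trivial edge stabilizers: without $C \cap A_1 = C \cap B_1 = 1$ one could not realize $C$ as a subgroup of a virtually free quotient $Q$, and hence could not cut out a torsion-free finite index $F \lhd Q$ that avoids $C$. Everything else is a combination of standard Bass--Serre theory and classical closure properties of residual amenability.
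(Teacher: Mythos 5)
Your argument is correct and follows essentially the same route as the paper: pass to normal finite-index $A_1, B_1$, use the amalgam of finite groups $(A/A_1)*_C(B/B_1)$ to produce a finite-index normal subgroup of $G$ meeting every conjugate of $C$ trivially, and then decompose that subgroup as a free product via its action on the Bass--Serre tree. The only cosmetic differences are that the paper extracts the finite-index subgroup from the residual finiteness of the finite amalgam rather than from its virtual freeness, and first reduces to the case where $A,B$ are amenable so that the free factors are amenable, whereas you keep $A,B$ residually amenable, invoke Berlai's free-product theorem directly, and then transfer residual amenability up the finite-index inclusion.
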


\begin{proof} Set $G=A*_C B$. Without loss of generality, assume that $A$ and $B$ are amenable. 
By replacing $A_1$ and $B_1$ with smaller subgroups if necessary we may assume that $A_1 \vartriangleleft A$ and $B_1 \vartriangleleft B$. The group $C$ embeds in both $A/A_1$ and in $B/B_1$.
The amalgam of finite groups $(A/A_1) *_C (B/B_1)$ is residually finite and so has a finite image $P$ where $C$ maps injectively. 
Clearly $G$ maps onto $P$ and the kernel of this map is a finite index normal subgroup $N$ of $G$ which does not intersect any conjugate of $C$. 

The subgroup $N$ acts on the Bass Serre tree for $G$ with trivial edge stabilizers and amenable vertex stabilizers. Therefore $N$ is a free product of amenable groups and is residually amenable. Since $N$ has finite index in $G$, it follows that $G$ is residually amenable. 
\end{proof}

The above result  is applicable for many classes of groups. For example, using that finitely generated nilpotent groups are residually finite we easily obtain the following corollary.

Let  $\gamma_n(G)$ denote the $n$-th term of the lower central series of $G$.
\begin{corollary} Let $A$ and $B$ be finitely generated residually amenable groups. Let $C$ be a finite subgroup of $A$ and $B$ such that $C \cap  \gamma_n (A)=1 = C \cap \gamma_m(B)$ for some $m,n \in \mathbb N$. Then $A*_C B$ is residually amenable.
\end{corollary}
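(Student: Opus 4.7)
The plan is to reduce the corollary to the immediately preceding theorem by producing, from the lower central series hypothesis, finite index subgroups $A_1 \leq A$ and $B_1 \leq B$ with $C \cap A_1 = 1 = C \cap B_1$. Once these are in hand, the theorem applies verbatim.

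First I consider the quotient $\bar{A} = A/\gamma_n(A)$. Because $A$ is finitely generated, so is $\bar{A}$, and being nilpotent of class at most $n-1$, the group $\bar{A}$ is residually finite by the classical theorem of Hirsch. The hypothesis $C \cap \gamma_n(A) = 1$ says precisely that the natural projection $A \to \bar{A}$ restricts to an injection on $C$, so the image $\bar{C}$ is a finite subgroup of $\bar{A}$ isomorphic to $C$.

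Next, for each nontrivial $\bar{c} \in \bar{C}$, residual finiteness of $\bar{A}$ supplies a finite index normal subgroup of $\bar{A}$ missing $\bar{c}$. Intersecting the finitely many such subgroups obtained as $\bar{c}$ ranges over $\bar{C} \setminus \{1\}$ gives a finite index subgroup $\bar{A_1} \leq \bar{A}$ with $\bar{A_1} \cap \bar{C} = 1$. Its preimage $A_1$ in $A$ then has finite index in $A$, and because $C \to \bar{A}$ is injective, $A_1 \cap C = 1$. The same argument applied to $B$ and $\gamma_m(B)$ yields a finite index subgroup $B_1 \leq B$ with $B_1 \cap C = 1$.

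With $A_1$ and $B_1$ produced, the previous theorem applies directly and delivers the conclusion that $A *_C B$ is residually amenable. There is no real obstacle in this argument; the only content beyond quoting the previous theorem is the use of residual finiteness of finitely generated nilpotent groups to separate the finite subgroup $C$ from the identity by a finite index subgroup, which is exactly the input highlighted in the remark preceding the corollary.
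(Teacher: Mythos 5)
Your argument is correct and is exactly the intended proof: the paper derives the corollary from the preceding theorem by using residual finiteness of the finitely generated nilpotent quotients $A/\gamma_n(A)$ and $B/\gamma_m(B)$ to produce finite index subgroups meeting $C$ trivially. Your write-up simply makes explicit the details the paper leaves to the reader.
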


\textbf{Acknowledgement} We thank the referee for pointing out the validity of the second sentence of Theorem 1 and for many suggestions which improved the presentation.

\end{document}